\documentclass[11pt]{article}
\usepackage{lmodern}
\usepackage{amsmath}
\usepackage[dvipsnames,usenames]{color}
\usepackage{amsfonts}
\usepackage{mathrsfs}
\usepackage{amssymb}
\usepackage{amsthm}
\usepackage{etoolbox}

\def\<{\langle}
\def\>{\rangle}
\numberwithin{equation}{section}
\newcommand{\Span}{\operatorname{span}}
\def\<{\langle}   
\def\>{\rangle}

\def\-{\overline}

\def\ov{\overline}

\def\CC{{\mathbb{C}}}

\def\ov{\overline}

\def\ld{\lambda}

\def\ord{\hbox{Ord}}

\newtheorem{theorem}{Theorem}[section]
\newtheorem{lemma}[theorem]{Lemma}

\newtheorem{conjecture}[theorem]{Conjecture}
\newtheorem{example}[theorem]{Example}
\newtheorem{remark}[theorem]{Remark}
\usepackage{xcolor}

\colorlet{myrefcolor}{gray!55!blue}
\usepackage[
colorlinks=true,
linkcolor=myrefcolor,
citecolor=myrefcolor,
urlcolor=black
]{hyperref}

\date{\ }	
\begin{document}
\title{\vspace{-1cm} \bf A Hopf Lemma for Holomorphic Maps into Hyperquadrics  \rm}

\author{Xiaojun Huang, \   Yuan Zhang\ \,  and \  Weixia Zhu}%\footnote{partially supported by NSF DMS-1501024}}
\date{}

\maketitle
\begin{center}
\vspace{-8pt} \small \itshape
  Dedicated to Professor Linda Rothschild  on the occasion of her 80th birthday 
\vspace{-5pt} 
\end{center}

\begin{abstract}

More than twenty years ago, Baouendi and the first author proved that a holomorphic map between hyperquadrics of the same signature is either totally degenerate or has a nonvanishing normal derivative for its normal component. This established a CR analogue of the classical Hopf lemma in arbitrary codimension, in the absence of pseudoconvexity. They further conjectured that the same Hopf-type property holds for holomorphic maps between Levi-nondegenerate hypersurfaces of the same signature. In this paper, we provide a counterexample to this  conjecture in full generality. We also prove the conjecture when the target hypersurface is a hyperquadric of any codimension, arguably the most important case for applications. 

\end{abstract}
\renewcommand{\thefootnote}{\fnsymbol{footnote}}
\footnotetext{\hspace*{-7mm}
\begin{tabular}{@{}r@{}p{16.5cm}@{}}
& 2020 Mathematics Subject Classification. Primary 32H02; Secondary  32V30.\\
& Key words and phrases.
CR transversality, Levi-nondegenerate,   Witt decomposition, non-isotropic rescaling. \\
& X. Huang is partially supported by NSF DMS-2247151.\\
& W.\ Zhu is partially supported by  FWF grant 10.55776/PAT1879425.
\end{tabular}}

\section{Introduction}

Let $M_1\subset\CC^n$ and $M_2\subset\CC^N$ be connected smooth CR hypersurfaces, where $2\leq n\leq N$. Let $F$ be a holomorphic map defined in a neighborhood $U\subset\CC^n$ of $M_1$ such that $F(M_1)\subset M_2$. A fundamental problem in the study of holomorphic mappings between real hypersurfaces is to determine when $F$ is CR transversal. More precisely, for $p\in M_1$, we say that $F$ is CR transversal to $M_2$ at $p$ if
\begin{equation*}
T^{1,0}_{F(p)}M_2+dF_p\bigl(T^{1,0}_p\CC^n\bigr)=T^{1,0}_{F(p)}\CC^N.
\end{equation*}
Roughly speaking, CR transversality amounts to the nonvanishing of the normal derivative of the normal component of $F$.

The transversality problem has been extensively studied in the literature, particularly in connection with regularity and rigidity questions for CR mappings in several complex variables. When the target hypersurface is strongly pseudoconvex, CR transversality follows by composing the map with a plurisubharmonic defining function and applying the classical Hopf lemma. For this reason, CR transversality is sometimes referred to as the CR Hopf lemma property. When the map is assumed to be holomorphic only on one side of the source hypersurface, CR transversality is, in many important settings, equivalent to a boundary unique-continuation problem for holomorphic maps \cite{BR90}. This problem has been actively studied from the perspectives of both partial differential equations and several complex variables. Related results may be found, for example, in the work of Huang and Krantz \cite{HK93}, Baouendi--Rothschild \cite{BR93in, BR93, BR93I}, Baouendi--Huang--Rothschild \cite{BHR95}, Berhanu \cite{Be21, Be25}, Berhanu-Hounie \cite{BH}, and others.

In the equidimensional case $n=N$, contributions were made by Forn{\ae}ss \cite{Fo78}, Baouendi--Rothschild \cite{BR90}, Ebenfelt--Rothschild \cite{ER06}, Huang \cite{Hu96}, Isaev \cite{Is88,Is95}, Huang--Pan \cite{HP96}, %Mir \cite{Mi02}, 
%Mir--Lamel \cite{LMir}, and Meylan--Mir--Zaitsev \cite{MMZ03}, 
among others.
For discussions of the connections among unique continuation for solutions of partial differential equations, the boundary Hopf lemma, and CR transversality, we refer the reader to the recent comprehensive survey article by Berhanu \cite{Be21a}.

In higher codimension, Baouendi--Huang \cite{BH05} proved a somewhat surprising rigidity theorem for mappings between hyperquadrics of the same positive signature. Based on this new rigidity, they proved that a holomorphic mapping between hyperquadrics of the same signature is either CR transversal or maps a full neighborhood of the source into the target hyperquadric.  Baouendi--Ebenfelt--Rothschild \cite{BER07} subsequently proved, in a more general setting, that CR transversality holds on an open dense subset under suitable hypotheses. We also mention  the related works of Baouendi--Ebenfelt--Huang \cite{BEH11}, Mir--Lamel \cite{LMir} and  Ebenfelt--Son \cite{ES12}. We  refer the reader to Rothschild \cite{Ro06} for a detailed historical account of such problems and many references. 

The rigidity and transversality theorems of Baouendi--Huang \cite{BH05} led them to pose the following question:

\begin{conjecture}[Baouendi--Huang, 2005]\label{bh}
Let $M_1\subset\CC^n$ and $M_2\subset\CC^N$ be connected, Levi-nondegenerate, real-analytic hypersurfaces of the same signature $\ell$, where $0<\ell\leq(n-1)/2$ and $3\leq n\leq N$. Let $F$ be a holomorphic map defined in a neighborhood of $M_1$ such that $F(M_1)\subset M_2$. Then either $F$ is a local CR embedding of $M_1$ into $M_2$, or $F$ is totally degenerate, in the sense that it maps a neighborhood of $M_1$ in $\CC^n$ into $M_2$.
\end{conjecture}

For Levi-nondegenerate hypersurfaces of the same signature, CR transversality of $F$ at a point is equivalent to $F$ being a local CR embedding near that point. The hyperquadric-target case of the above problem has received particular attention in connection with rigidity and extension problems. Recall that the standard hyperquadric of signature $\ell$ in $\CC^N$ is
\begin{equation}\label{hyq}
\mathbb H^N_\ell=\bigl\{(Z,W)\in\CC^{N-1}\times\CC:\operatorname{Im}W=|Z|_\ell^2\bigr\},
\end{equation}
where $\langle a,\overline b\rangle_\ell=-\sum_{j=1}^\ell a_j\overline b_j+\sum_{j=\ell+1}^{N-1}a_j\overline b_j$ and $|a|_\ell^2=\langle a,\overline a\rangle_\ell$.

The first progress toward the conjecture in this setting was obtained by Huang--Zhang \cite{HZ13,HZ15}, who proved the transversality  for holomorphic mappings $F:M_\ell\to \mathbb H^N_\ell$ when $M_\ell\subset\CC^n$ is a smooth Levi-nondegenerate hypersurface of signature $\ell$ under the codimension restraint $N-n<(n-1)/2$. The proof combines a scaling method, a quantitative version of a lemma of the first author with a detailed normalization and weighted jet analysis. More recently, Huang--Zhu \cite{HZ25} generalized  this result and proved the same conclusion under the larger codimension range $N-n<n-1$. %Their result applies to smooth Levi-nondegenerate source hypersurfaces. 
Thus the remaining issue in the hyperquadric-target case is whether the codimension restriction can be removed altogether. The main result of this paper gives an affirmative answer.

\begin{theorem}\label{main}
Let $M_\ell\subset\CC^n$, $n\geq3$, be a smooth Levi-nondegenerate hypersurface of signature $\ell$, where $0<\ell\leq(n-1)/2$, and let $p\in M_\ell$. Suppose that $F$ is holomorphic in a neighborhood of $p$ and sends $M_\ell$ into $\mathbb H^N_\ell$, $N\geq n$. Then either $F$ is CR transversal at $p$, or $F$ maps a neighborhood of $p$ in $\CC^n$ into $\mathbb H^N_\ell$. Equivalently, if $F$ is not totally degenerate as a germ at $p$, then it is a local CR embedding near $p$.
\end{theorem}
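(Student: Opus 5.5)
Assume $F$ is not CR transversal at $p$ and not totally degenerate; we derive a contradiction. First normalize. Take $p=0$ and write $M_\ell$ locally as $\operatorname{Im}w=|z|_\ell^2+O(3)$ in coordinates $(z,w)\in\CC^{n-1}\times\CC$. Compose $F$ with an automorphism of $\mathbb H^N_\ell$ so that $F(0)=0$. Write $F=(f,g)$ with $f\in\CC^{N-1}$ and $g$ scalar. The mapping equation is
\begin{equation*}
\operatorname{Im} g(z,w)=|f(z,w)|_\ell^2 \quad\text{on } M_\ell .
\end{equation*}
Non-transversality at $0$ means $\partial_w g(0)=0$. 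Taking the equation's weight-two part, the tangential linear part $f_z(0)$ is then isotropic for $\langle\cdot,\ov\cdot\rangle_\ell$, so its image $V$ is a totally isotropic subspace of $(\CC^{N-1},\langle\cdot,\ov\cdot\rangle_\ell)$.

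Next split the target by a Witt decomposition adapted to the whole map. Choose the maximal totally isotropic subspace $E$ containing the "degenerate part" of $f$. Decompose $\CC^{N-1}=E\oplus E'\oplus W$, with $E'$ paired dually to $E$ and $W$ the orthogonal nondegenerate complement. Apply a unitary change in $U(\ell,N-1-\ell)$ so that $f=(f_E,f_{E'},f_W)$ and $|f|_\ell^2=2\operatorname{Re}\langle f_E,\ov{f_{E'}}\rangle+|f_W|^2_{W}$. Since $\ell\le (n-1)/2$ and the target signature is also $\ell$, $\dim E\le\ell$. Signature counting then forces $W$ to be positive definite in the case where the isotropic part exhausts all $\ell$ negative directions.

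Now run a non-isotropic rescaling $z\mapsto \lambda z$, $w\mapsto\lambda^2 w$, together with the induced rescaling on the target components, weighting $E$ and $E'$ differently. This extracts in the limit a holomorphic map from the model quadric $\mathbb H^n_\ell$ into $\mathbb H^N_\ell$ that is still non-transversal at $0$. The key claim is that this limit is nonconstant in the normal direction unless $F$ itself was degenerate. Establishing this needs weighted jet bookkeeping: one must show that the first nonvanishing weighted homogeneous term of $g$ and the paired terms $\langle f_E,\ov{f_{E'}}\rangle$ survive the limit.

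For the limiting map between hyperquadrics, invoke the Baouendi--Huang theorem. A map between hyperquadrics of the same signature $\ell$ is either transversal or totally degenerate. Since the limit is non-transversal, it is totally degenerate: $g\equiv 0$ and $f$ takes values in an isotropic subspace. Then feed this back into the original map. Use the maximality of $E$ and an induction on the order of vanishing of $g$ in $w$ to show that $g$ vanishes to infinite order along $w$. Propagation of the relation $\operatorname{Im} g=|f|^2_\ell$ then gives $g\equiv 0$ and $\langle f,\ov f\rangle_\ell\equiv0$ on a neighborhood, which is total degeneracy and the desired contradiction.

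\textbf{Main obstacle.} The hardest step is the rescaling when $N-n\ge n-1$. There the earlier counting arguments of Huang--Zhang and Huang--Zhu fail: the isotropic part can be large, so a naive limit may lose all information. The Witt decomposition is precisely what should keep control. My first attempt at this step would be to choose the rescaling weights on $E$ versus $E'$ by the vanishing orders of $f_E$ and $f_{E'}$, so that the pairing term is exactly balanced against $\operatorname{Im} g$.
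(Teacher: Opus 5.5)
\textbf{Gap: the proof is built around the wrong invariant.} Your outline has the right ingredients: a Witt decomposition, a hyperbolic dilation in $U(N-1,\ell)$ composed with the Heisenberg dilation, and reduction to Baouendi--Huang. But it never identifies the quantity that makes the argument run. That quantity is the weighted order $r=\ord_{\mathrm{wt}}a$ of the factor in $\rho_{\mathbb H^N_\ell}\circ F=a\,\rho_{M_\ell}$; non-degeneracy of $F$ means exactly $r<\infty$.

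Comparing weights in the complexified identity gives two facts:
\begin{itemize}
\item $g^{(\nu)}=0$ for $\nu<r+2$;
\item $\langle f^{(i)}(z,w),\bar f^{(j)}(\chi,\tau)\rangle_\ell=0$ for $i+j<r+2$.
\end{itemize}
These relations define the isotropic subspace: $W=\sum_{j<(r+2)/2}V_j$, with a basis adapted to the weight filtration (levels $\mu_\alpha$). They also give the bounds $\ord_{\mathrm{wt}}x_\alpha\ge\mu_\alpha$, $\ord_{\mathrm{wt}}y_\alpha\ge r+2-\mu_\alpha$ and $\ord_{\mathrm{wt}}h\ge\lceil (r+2)/2\rceil$, which are what make $t^{-(r+2)/2}\Lambda_t f\circ\delta_t$ converge.

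Your ``maximal totally isotropic subspace containing the degenerate part of $f$'' and your weights read off from vanishing orders of $f_E,f_{E'}$ are left unspecified. Without the complementary-order bound on the dual components, the rescaled map can blow up.

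Normalizing by the first nonzero homogeneous term of $g$ is the wrong choice. The limit identity forces $g_0=w\,A^{(r)}(z,w,0,0)$, which a priori may vanish even though $a^{(r)}\not\equiv0$. Conversely, at any scaling level $m>r+2$ the right-hand side $t^{-m}a(\delta_t)\rho_{M_\ell}(\delta_t)$ diverges.

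\textbf{Gap: the ending does not close.} Scaling at level exactly $r+2$ yields $\rho_{\mathbb H^N_\ell}(F_0)=a^{(r)}\rho_{\mathbb H^n_\ell}$ with $a^{(r)}\not\equiv0$. So $F_0$ is automatically \emph{not} totally degenerate. Meanwhile $g_0=g^{(r+2)}$ has no weight-two term, so $F_0$ is not transversal at $0$, and Baouendi--Huang gives the contradiction immediately.

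You instead accept that the limit is totally degenerate and try to transfer this back through an induction on the order of $g$ in $w$ followed by ``propagation.'' As stated this does not work, for three reasons:
\begin{itemize}
\item Degeneracy of a single weighted-homogeneous limit constrains only leading terms.
\item You do not say what each inductive step rescales or what it gains.
\item Even $g\equiv0$ does not by itself give $|f|_\ell^2\equiv0$ off $M_\ell$; that implication is a special case of the theorem.
\end{itemize}
An induction can be made to work, but only on $\ord_{\mathrm{wt}}a$, with each step showing $a^{(r)}=0$; that is the paper's one-step contradiction.

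The two halves of your plan are also at odds. You claim the limit's normal component survives ``unless $F$ was degenerate,'' but if that were proved it would already finish the argument, with no feedback step needed.

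Finally, $M_\ell$ is only assumed smooth, so you need to address that case. The paper reduces to real-analytic $M_\ell$ via Malgrange's theorem.
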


Here the last assertion follows from the standard Levi-form identity for a CR transversal map between Levi-nondegenerate hypersurfaces of the same signature: transversality implies that the differential is injective on the CR tangent space, and hence $F$ is a local CR embedding. 
%This theorem gives an affirmative answer to the Baouendi--Huang transversality problem when the target is a hyperquadric of the same signature. 
The hyperquadric structure of the target enters Theorem \ref{main} in an essential way. Indeed, {\it we will show that the transversality asserted in Conjecture \ref{bh} fails when the target is an arbitrary Levi-nondegenerate hypersurface of the same signature}, and Theorem \ref{main} is sharp in this respect; see Example \ref{ex:counterexample}. 

We briefly describe the main idea of the proof. The basic tool remains the scaling method, as in \cite{HZ15,HZ25}. However, the scaling used here differs fundamentally from that in \cite{HZ15,HZ25}. In those works, we used the standard Heisenberg scaling, which is isotropic and elliptic in the complex tangential directions. To obtain convergence under such a scaling, one needs very precise normalizations and quantitative jet estimates, relying essentially on the results of Mir, Meylan-Mir-Zaitsev and  Ebenfelt-Huang-Zaitsev \cite{Mi02,MMZ03,EHZ04}.

In the present work, we use a substantially different hyperbolic non-isotropic scaling  combined with the Heisenberg dilations. At a nontransversal point, the mapping equation takes the form
$\rho_{{\mathbb H}_\ell^N}\circ F=a\rho_{M_\ell}$, where the $\rho$'s are defining functions and $a$ is real-analytic and vanishes at the point under consideration. Assuming that $F$ is not totally degenerate, we examine the first nonzero weighted term of $a$. The corresponding low-order terms of the tangential component of $F$ satisfy a family of orthogonality relations with respect to the target Levi form. These
relations yield an adapted Witt decomposition of the target space associated with a totally isotropic subspace.

Moreover, the dimension of this isotropic subspace is bounded by the Levi
signature $\ell$, and is therefore independent of the codimension $N-n$. The key new idea of this paper is to combine hyperbolic non-isotropic dilations in $U(N-1,\ell)$, arising from the Witt decomposition associated with $F$, with Heisenberg dilations. This enables us to rescale $F$ and obtain a limit map
\[
F_0\colon {\mathbb H}_\ell^n\to {\mathbb H}_\ell^N
\]
that is not CR transversal at the point under consideration. The
Baouendi--Huang theorem then implies that $F_0$ must be totally degenerate. Finally, the limiting mapping equation implies that $F$ itself is totally degenerate.

Non-isotropic dilations in CR-tangential directions have previously been used by several authors; see, for example, Huang and Yin \cite{HY09}, and, subsequently, the first author with James and Li \cite{HJL26}. Geometrically, non-isotropic scaling may be viewed as scaling along tangential orbits, and has proved to be a powerful tool. However, the non-isotropic dilations considered in \cite{HY09,HJL26} are not hyperbolic, since the settings in those works are pseudoconvex. In a recent elegant paper, Fang \cite{Fa26} successfully used hyperbolic non-isotropic dilations in $SO(n,\mathbb{C})$, as suggested by the
first author, to resolve an old problem of Morimoto and Nagano.

\section{A counterexample to the original conjecture}

In this section, we construct the following example 
in which $M_1$ and $M_2$ are real-analytic Levi-nondegenerate
hypersurfaces of the same signature $\ell=1$, while the holomorphic mapping $F$ is neither CR transversal at the origin nor totally degenerate.
Thus the corresponding transversality statement fails for a general
Levi-nondegenerate target hypersurface.  In particular, this example shows that the hyperquadric structure of the target is essential in Theorem~\ref{main}.

\begin{example}\label{ex:counterexample}
	
\medskip
Let $M_1=\{(z,w)\in  \mathbb C^4: \ \rho_1={\rm{Im}}\  w+|z_1|^2-|z_2|^2-|z_3|^2=0\}$,  and  $ M_2=\{(Z,W)\in {\mathbb C^5}:\ \rho_2={\rm{Im}}\ W+|Z_1|^2-|Z_2|^2-|Z_3|^2-|Z_4|^2+2|Z_1|^2|Z_2|^2=0 \}.$
For $\ld(\neq 0)\in {\mathbb C}$, we set $f_1=\dfrac{z_1}{\sqrt 2(z_2+\ld)}$. Consider the holomorphic map $F$, defined in a neighborhood of $0\in \mathbb C^4$,  by
$$
F=\bigg(f_1, \dfrac{2(z_2+\ld)}{1-iw+2z_2\overline \ld+|\ld|^2}f_1,\dfrac{2z_3}{1-iw+2z_2\overline \ld+|\ld|^2}f_1,\dfrac{1+iw-2z_2\overline \ld-|\ld|^2}{1-iw+2z_2\overline \ld+|\ld|^2}f_1,0\bigg). 
$$
Then a direct computation gives
$$
\rho_2\circ F=\dfrac{2|z_1|^2}{|z_2+\ld|^2\big|1-iw+2z_2\overline \ld+|\ld|^2\big |^2}\cdot\rho_1.
$$
Hence $F(M_1)\subset M_2$, while $F$ is neither CR transversal at $0$ nor totally degenerate.
\end{example}

\section{Witt decomposition for an indefinite Hermitian form}

In this section, we collect several elementary facts, especially the Witt decomposition, concerning indefinite Hermitian spaces that will be used in the proof. (See, for instance,  \cite[Chapter XV]{La02}\cite[Chapter 7]{Sc85} for the Witt decomposition.) 

Let $\mathbb C^{N-1}$  be equipped with the non-degenerate Hermitian form
$$
\langle Z,\overline{\Xi}\rangle_\ell
=-\sum_{j=1}^{\ell}Z_j\overline{\Xi_j}+
\sum_{j=\ell+1}^{N-1}Z_j\overline{\Xi_j},
$$
of signature $(\ell,N-1-\ell)$. 
A vector $v\in\mathbb C^{N-1}\setminus\{0\}$ is called \emph{isotropic} if $\langle v,\bar v\rangle_\ell=0.$ A complex linear subspace $W\subset\mathbb C^{N-1}$ is 
\emph{totally isotropic} if
$\langle v,\bar w\rangle_\ell=0$ for all $v,w\in W$.
In particular, every nonzero vector in a totally isotropic subspace is isotropic. 

Notice that the elementary dimension bound for a totally isotropic subspace $W$ is $
\dim_{\mathbb C}W\le\min\{\ell,N-1-\ell\}
$ (see for example \cite{La02, Sc85}). We now introduce a form of the Witt decomposition adapted to a totally isotropic subspace \cite{Sc85}. This formulation is  important for us to set up  the non-isotropic  rescaling argument used in our proof. For subspaces $V,W\subset\CC^{N-1}$, we write $V\perp_\ell W$ if they are orthogonal with respect to $\langle\cdot,\overline{\cdot}\rangle_\ell$. 
A subspace $E \subset\mathbb C^{N-1}$   is said to be \emph{nondegenerate} if the restriction of the Hermitian form  $ \langle \cdot, \bar\cdot\rangle_\ell$ to
$E$ is nondegenerate, or equivalently,
$ 
E\cap E^{\perp_\ell}=\{0\},
$ 
where $E^{\perp_\ell}$ is the orthogonal complement of $E$ with respect to  $ \langle \cdot, \bar\cdot\rangle_\ell$.

\begin{lemma}\label{witt}
Let $W\subset\CC^{N-1}$ be a totally isotropic subspace of dimension $q\geq1$, and let $e_1,\ldots,e_q$ be a basis of $W$. There are vectors $e'_1,\ldots,e'_q$ such that
\begin{equation}\label{w1}
\langle e_\alpha,\overline{e'_\beta}\rangle_\ell=\delta_{\alpha\beta},\qquad \langle e'_\alpha,\overline{e'_\beta}\rangle_\ell=0,
\end{equation}
for $1\leq\alpha,\beta\leq q$. Setting $W'=\Span\{e'_1,\ldots,e'_q\}$ and $E=(W\oplus W')^{\perp_\ell}$, then
\begin{equation}\label{w2}
\CC^{N-1}=W\oplus W'\oplus E,
\end{equation}
where $E$ is nondegenerate.
\end{lemma}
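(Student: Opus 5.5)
The plan is to prove Lemma \ref{witt} by the standard linear-algebra construction of hyperbolic partners. The argument uses only the nondegeneracy of $\langle\cdot,\overline{\cdot}\rangle_\ell$ on $\CC^{N-1}$, so the Riesz map $v\mapsto\langle\cdot,\bar v\rangle_\ell$ is a conjugate-linear isomorphism of $\CC^{N-1}$ onto its dual. It proceeds in three steps: first find vectors dual to $e_1,\ldots,e_q$, then correct them so that their span is totally isotropic, and finally split off the orthogonal complement.

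\emph{Step 1 (dual vectors).} The functionals $Z\mapsto\langle Z,\overline{v}\rangle_\ell$ exhaust the dual space. Since $e_1,\ldots,e_q$ are linearly independent, we can choose $f_1,\ldots,f_q$ with $\langle e_\alpha,\overline{f_\beta}\rangle_\ell=\delta_{\alpha\beta}$. Concretely, extend $\{e_\alpha\}$ to a basis, take the dual basis of functionals, and pull these back through the Riesz isomorphism.

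\emph{Step 2 (isotropic correction).} Set $c_{\alpha\beta}=\langle f_\alpha,\overline{f_\beta}\rangle_\ell$. This is a Hermitian matrix, so $\overline{c_{\alpha\beta}}=c_{\beta\alpha}$. Define
\[
e'_\alpha=f_\alpha-\tfrac12\sum_\gamma \overline{c_{\alpha\gamma}}\,e_\gamma .
\]
Because $W$ is totally isotropic, adding elements of $W$ to the $f$'s does not change the pairing with the $e$'s, so $\langle e_\alpha,\overline{e'_\beta}\rangle_\ell=\delta_{\alpha\beta}$ still holds. For the second relation, expand $\langle e'_\alpha,\overline{e'_\beta}\rangle_\ell$. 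The $W$--$W$ term vanishes, and each cross term contributes $-\tfrac12 c_{\alpha\beta}$, using $\langle e_\gamma,\overline{f_\beta}\rangle_\ell=\delta_{\gamma\beta}$ and its conjugate. Hence the total is $c_{\alpha\beta}-\tfrac12c_{\alpha\beta}-\tfrac12c_{\alpha\beta}=0$. Keeping track of which index gets conjugated in the cross terms is the only step needing care, and I expect it to be the main (if mild) obstacle; I will fix the coefficient $\overline{c_{\alpha\gamma}}$ versus $c_{\gamma\alpha}$ by a direct check of that expansion.

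\emph{Step 3 (splitting).} On $P=W\oplus W'$ the Gram matrix in the basis $(e,e')$ is $\begin{pmatrix}0&I\\ I&0\end{pmatrix}$, which is invertible. This shows at once that $e_1,\ldots,e_q,e'_1,\ldots,e'_q$ are linearly independent, so the sum $W+W'$ is direct of dimension $2q$, and that $P$ is nondegenerate. For a nondegenerate subspace $P$ we have $\CC^{N-1}=P\oplus P^{\perp_\ell}$. Indeed, $\dim P^{\perp_\ell}=N-1-2q$ by nondegeneracy of the ambient form, and $P\cap P^{\perp_\ell}=\{0\}$. Finally, $E=P^{\perp_\ell}$ is nondegenerate: a vector of $E$ orthogonal to all of $E$ is also orthogonal to $P$, hence to the whole space, so it is $0$. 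This gives \eqref{w1} and \eqref{w2}.
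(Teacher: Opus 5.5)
Your route is the paper's: dual vectors, a correction along $W$ by half the Gram matrix, then splitting off $(W\oplus W')^{\perp_\ell}$. Steps 1 and 3 are fine. Step 3 is in fact more detailed than the paper, which only asserts that $E$ is nondegenerate and that \eqref{w2} follows.

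\textbf{Step 2 fails as written, because the coefficient $\overline{c_{\alpha\gamma}}$ is the wrong one.} Write $e'_\alpha=f_\alpha+\sum_\gamma a_{\alpha\gamma}e_\gamma$. Use $\langle e_\gamma,\overline{f_\beta}\rangle_\ell=\delta_{\gamma\beta}$, its conjugate $\langle f_\alpha,\overline{e_\gamma}\rangle_\ell=\delta_{\alpha\gamma}$, and total isotropy of $W$. Then
\[
\langle e'_\alpha,\overline{e'_\beta}\rangle_\ell=c_{\alpha\beta}+a_{\alpha\beta}+\overline{a_{\beta\alpha}} .
\]
With your choice $a_{\alpha\gamma}=-\tfrac12\overline{c_{\alpha\gamma}}$, each cross term equals $-\tfrac12\overline{c_{\alpha\beta}}=-\tfrac12 c_{\beta\alpha}$, not $-\tfrac12 c_{\alpha\beta}$. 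The total is therefore $c_{\alpha\beta}-\overline{c_{\alpha\beta}}=2i\operatorname{Im}c_{\alpha\beta}$, which is generally nonzero off the diagonal.

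A concrete instance: in $\CC^4$ with $\ell=2$, take $e_1=(1,0,1,0)$, $e_2=(0,1,0,1)$, $f_1=(0,i,1,i)$ and $f_2=(0,0,0,1)$. These satisfy Step 1, and $c_{12}=i$. Your formula then gives $e'_1=(-\tfrac12,\tfrac{3i}{2},\tfrac12,\tfrac{3i}{2})$ and $e'_2=(-\tfrac i2,-\tfrac12,-\tfrac i2,\tfrac12)$, with $\langle e'_1,\overline{e'_2}\rangle_\ell=2i\neq0$.

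The ``direct check'' you planned would not rescue this. The alternative you intended to compare, $c_{\gamma\alpha}$, equals $\overline{c_{\alpha\gamma}}$ by Hermitian symmetry, so both candidates fail.

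\textbf{The fix is the paper's formula.} Take $a_{\alpha\gamma}=-\tfrac12 c_{\alpha\gamma}$, that is,
\[
e'_\alpha=f_\alpha-\tfrac12\sum_\gamma c_{\alpha\gamma}e_\gamma ,
\]
which is the paper's $u_\alpha-\tfrac12\sum_\beta H_{\alpha\bar\beta}e_\beta$. Then $a_{\alpha\beta}+\overline{a_{\beta\alpha}}=-\tfrac12 c_{\alpha\beta}-\tfrac12\overline{c_{\beta\alpha}}=-c_{\alpha\beta}$, and the sum vanishes. With this one correction the rest of your argument goes through unchanged.
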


\begin{proof}
For convenience of the reader, we sketch a proof of this  lemma \cite{La02, Sc85} as follows:
Choose  a linearly independent set $u_1,\ldots,u_q$ with $\langle e_\alpha,\bar u_\beta\rangle_\ell=\delta_{\alpha\beta}$, and set $H_{\alpha\ov\beta}=\langle u_\alpha,\bar
u_\beta\rangle_\ell$.  
Define  $e'_\alpha=u_\alpha-\frac12\sum_{\beta=1}^{q}H_{\alpha\ov\beta}e_\beta$, 
one verifies that 
$\langle e_\alpha',\bar e'_\gamma\rangle_\ell = 0$. Hence $W'$ is
totally isotropic.
The sum $W+W'$ is direct: if $v=\sum_\alpha c_\alpha e_\alpha=\sum_\beta d_\beta
e'_\beta$, then pairing with $\bar e'_\gamma$ gives $c_\gamma=0$ for all $\gamma$
because $W'$ is totally isotropic.  In the basis $(e_1,\dots,e_q,e'_1,\dots,e'_q)$,
the  matrix of $\langle\cdot,\bar\cdot\rangle_\ell$ restricted to $W\oplus W'$
is $\left(\begin{smallmatrix}0&I_q\\ I_q&0\end{smallmatrix}\right)$, which is
invertible; hence $W\oplus W'$ is nondegenerate.   Its orthogonal complement $E$ is therefore nondegenerate, and \eqref{w2} follows.
\end{proof}

With respect to \eqref{w2}, every vector $Z\in\CC^{N-1}$ has a unique decomposition 
$$
Z=\sum_{\alpha=1}^q x_\alpha e_\alpha+\sum_{\alpha=1}^q y_\alpha e'_\alpha+Z_E,
$$
where $Z_E\in E$. Moreover, it follows from \eqref{w1} that $x_\alpha=\langle Z,\overline{e'_\alpha}\rangle_\ell$, $y_\alpha=\langle Z,\overline e_\alpha\rangle_\ell$, and $\langle Z,\overline Z\rangle_\ell=\sum_{\alpha=1}^q(x_\alpha\overline y_\alpha+y_\alpha\overline x_\alpha)+\langle Z_E,\overline{Z_E}\rangle_\ell$.

Denote by $U(N-1,  \ell)$ the indefinite unitary group preserving the
Hermitian form $\langle\cdot,\bar\cdot\rangle_\ell$ on
$\mathbb C^{N-1}$. The following classical hyperbolic non-isotropic
dilation in $U(N-1, \ell)$, expressed in Witt coordinates
(see \cite{He01}), will play a fundamental role in our proof.
Related non-isotropic dilations have also been used in other  contexts;
see, for example, Gao--Ng--Seo \cite{GNS} and  Broussous--Stevens \cite{BroussousStevens}, etc.
\begin{lemma}\label{dil}
Let $\lambda_1,\ldots,\lambda_q>0$. Define a linear transformation $\Lambda:\mathbb C^{N-1}\to\mathbb C^{N-1}$ by $\Lambda e_\alpha=\lambda_\alpha e_\alpha$, $\Lambda e'_\alpha=\lambda_\alpha^{-1}e'_\alpha$, and $\Lambda|_E=I$. Then $\Lambda\in U(N-1, \ell)$.
\end{lemma}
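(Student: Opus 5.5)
We check directly that $\Lambda$ preserves $\langle\cdot,\bar\cdot\rangle_\ell$, using the Witt coordinates from Lemma \ref{witt}. First, $\Lambda$ is well defined and invertible. By \eqref{w2}, $\CC^{N-1}=W\oplus W'\oplus E$, so prescribing $\Lambda$ on the basis $e_\alpha, e'_\alpha$ and on $E$ determines a unique linear map. Its inverse is the map with parameters $\lambda_\alpha^{-1}$, since every $\lambda_\alpha>0$ is nonzero.

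For preservation of the form, write any $Z\in\CC^{N-1}$ as
\[
Z=\sum_{\alpha=1}^q x_\alpha e_\alpha+\sum_{\alpha=1}^q y_\alpha e'_\alpha+Z_E .
\]
Then
\[
\Lambda Z=\sum_\alpha (\lambda_\alpha x_\alpha) e_\alpha+\sum_\alpha(\lambda_\alpha^{-1}y_\alpha)e'_\alpha+Z_E .
\]
We apply the identity recorded after Lemma \ref{witt},
\[
\langle Z,\overline Z\rangle_\ell=\sum_{\alpha}(x_\alpha\overline y_\alpha+y_\alpha\overline x_\alpha)+\langle Z_E,\overline{Z_E}\rangle_\ell .
\]
This identity follows from \eqref{w1}, the total isotropy of $W$, and the fact that $E\perp_\ell (W\oplus W')$.

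Applied to $\Lambda Z$, each cross term becomes
\[
\lambda_\alpha x_\alpha\,\overline{\lambda_\alpha^{-1}y_\alpha}=x_\alpha\overline y_\alpha .
\]
Here we use that $\lambda_\alpha$ is real, so $\overline{\lambda_\alpha^{-1}}=\lambda_\alpha^{-1}$. The conjugate terms behave the same way, and the $E$-component is unchanged. Hence
\[
\langle \Lambda Z,\overline{\Lambda Z}\rangle_\ell=\langle Z,\overline Z\rangle_\ell
\quad\text{for all } Z .
\]

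To pass from the quadratic form to the sesquilinear form, we use polarization. A Hermitian form is determined by its values on the diagonal:
\[
4\langle Z,\overline\Xi\rangle_\ell=\sum_{k=0}^{3} i^k\langle Z+i^k\Xi,\overline{Z+i^k\Xi}\rangle_\ell .
\]
This gives $\langle\Lambda Z,\overline{\Lambda\Xi}\rangle_\ell=\langle Z,\overline\Xi\rangle_\ell$ for all $Z,\Xi$, and therefore $\Lambda\in U(N-1,\ell)$.

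Alternatively, one can verify invariance on pairs of basis vectors. The pairings $\langle e_\alpha,\overline{e_\beta}\rangle_\ell$, $\langle e'_\alpha,\overline{e'_\beta}\rangle_\ell$ and all pairings involving $E$ with $W\oplus W'$ vanish, and so are trivially preserved. Pairings within $E$ are preserved because $\Lambda|_E=I$. The only nontrivial pairing is
\[
\langle\Lambda e_\alpha,\overline{\Lambda e'_\beta}\rangle_\ell=\lambda_\alpha\lambda_\beta^{-1}\delta_{\alpha\beta}=\delta_{\alpha\beta}.
\]

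There is no real obstacle here. The only point needing care is that the $\lambda_\alpha$ are real, so that conjugation does not spoil the cancellation $\lambda_\alpha\overline{\lambda_\alpha^{-1}}=1$. For complex parameters one would need $|\lambda_\alpha|$ to enter appropriately instead.
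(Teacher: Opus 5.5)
Your proof is correct and follows essentially the same route as the paper: expand in the Witt coordinates of Lemma \ref{witt} and use $\lambda_\alpha\overline{\lambda_\alpha^{-1}}=1$ for real $\lambda_\alpha$. The difference is only cosmetic. The paper computes the sesquilinear pairing $\langle \Lambda Z,\overline{\Lambda\Xi}\rangle_\ell$ for two vectors directly, while you check the diagonal and then polarize (or check basis pairs), which is equivalent.
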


\begin{proof}
Let
\[
Z=\sum_{\alpha=1}^q x_\alpha e_\alpha+\sum_{\alpha=1}^q y_\alpha e'_\alpha+Z_E,
\qquad
\Xi=\sum_{\alpha=1}^q \xi_\alpha e_\alpha+\sum_{\alpha=1}^q \eta_\alpha e'_\alpha+\Xi_E,
\]
with $Z_E,\Xi_E\in E$. Then
$\langle Z,\overline{\Xi}\rangle_\ell
=\sum_{\alpha=1}^q\left(x_\alpha\overline{\eta_\alpha}+y_\alpha\overline{\xi_\alpha}
\right)+\langle Z_E,\overline{\Xi_E}\rangle_\ell.$
Since $\lambda_\alpha>0$, we have
\[
\begin{aligned}
\langle \Lambda Z,\overline{\Lambda \Xi}\rangle_\ell
&=\sum_{\alpha=1}^q
\left(
\lambda_\alpha x_\alpha
\overline{\lambda_\alpha^{-1}\eta_\alpha}
+\lambda_\alpha^{-1}y_\alpha
\overline{\lambda_\alpha\xi_\alpha}
\right)
+\langle Z_E,\overline{\Xi_E}\rangle_\ell =
\langle Z,\overline{\Xi}\rangle_\ell.
\end{aligned}
\]
Thus $\Lambda$ preserves the Hermitian form. 
\end{proof}

\section{Proof of the main theorem}

In the proof of the main theorem, the subspace $W$ will arise from the coefficient vectors of the low-weighted terms in the tangential component of the mapping. The orthogonality relations among these terms imply that $W$ is totally isotropic. We then choose a basis $\{e_\alpha\}$ of $W$ adapted to the weighted filtration and apply a hyperbolic, non-isotropic rescaling composed with the Heisenberg scaling to reduce to mappings between hyperquadrics in \cite{BH05}.
We assume, after replacing $\rho_{M_\ell}$ by $-\rho_{M_\ell}$ and permuting the $z_j$ if necessary, that the number of negative Levi eigenvalues equals $\ell\le (n-1)/2$.

\begin{proof}[Proof of Theorem \ref{main}:]
We first consider the case when  $M_\ell\subset \mathbb C^n$ is a real-analytic Levi-nondegene-rate
hypersurface of signature $\ell$, where $ 0<\ell\le \frac{n-1}{2}.$ 
After choosing normal coordinates at the origin (see, for example,  \cite{HZ15}) and assigning the usual weights ${\rm wt} (z_j) =1,  {\rm wt} (w)=2$, we write a real analytic defining equation of $M_\ell$ to be 
$$ 
\rho_{M_\ell}(z,w,\bar z,\bar w)
={\rm{Im}}\ w 
-\langle z,\bar z\rangle_\ell
+O_{\mathrm{wt}}(4) 
$$ 
near the origin, where $\langle z,\bar z\rangle_\ell
=-\sum_{j=1}^{\ell}z_j\bar z_j+\sum_{j=\ell+1}^{n-1}z_j\bar z_j$. 
Since the group $\operatorname{Aut}(\mathbb H^N_\ell)$ acts transitively on $\mathbb H^N_\ell$, after composing $F$ with a suitable $\sigma\in\operatorname{Aut}(\mathbb H^N_\ell)$, we may further assume that $F(0)=0$. Thus $F=(f,g):(\mathbb C^n,0)\to (\mathbb C^N,0)$ is a germ of a holomorphic mapping such that $F(M_\ell)\subset {\mathbb H}_\ell^N$. For the target ${\mathbb H}_\ell^N$, it has defining function \eqref{hyq}. 
Then there exists a real-analytic function $a$ such that
\begin{equation}\label{eq:factor}
\rho_H(F(z,w),\overline{F(z,w)})
=a(z,w,\bar z,\bar w)\rho_{M_\ell}(z,w,\bar z,\bar w).
\end{equation}
Moreover, the mapping $F$ is CR transversal at the origin precisely when $a(0)\ne0$.  

Supposing that $a(0)=0$, we seek to show that $a\equiv0$. Namely, $F$ maps a full neighborhood of the origin into ${\mathbb H}_\ell^N$. Assume by contradiction that $a\not\equiv 0$. Let $r:=\ord_{\mathrm{wt}} a \ge1$ and write
$$
a=a^{(r)}+O_{\mathrm{wt}}(r+1),
\qquad
a^{(r)}\not\equiv0.
$$
Here, $a^{(r)}$ is a weighted homogeneous polynomial of weighted order $r$.

We first complexify \eqref{eq:factor}. Let $(z,w)$ and $(\chi,\tau)$ be independent variables in $\mathbb C^{n-1}\times\mathbb C$, with $\chi$ and $\tau$ assigned weights $1$ and $2$, respectively. We write $\bar f(\chi,\tau)$ and $\bar g(\chi,\tau)$ for the holomorphic functions obtained by conjugating the coefficients of $f$ and $g$. Similarly, let $A$ and $R$ be the complexifications of $a$ and $\rho_{M_\ell}$. Then
\begin{equation}\label{eq:complexified}
\frac{g(z,w)-\bar g(\chi,\tau)}{2i}-\left\langle
f(z,w),\bar f(\chi,\tau)\right\rangle_\ell
=A(z,w,\chi,\tau)R(z,w,\chi,\tau).
\end{equation}
Here
$$
R(z,w,\chi,\tau)=\frac{w-\tau}{2i}-\langle z,\chi\rangle_\ell+O_{\mathrm{wt}}(4).
$$
Since $\ord_{\mathrm{wt}}A=r$ and $\ord_{\mathrm{wt}}R=2$, the right-hand side of \eqref{eq:complexified} has weighted order at least $r+2$, which implies
\begin{equation}\label{eq:LHSvanish}
\left[\frac{g(z,w)-\bar g(\chi,\tau)}{2i}-
\left\langle f(z,w),\bar f(\chi,\tau)\right
\rangle_\ell \right]^{(<{r+2})}=0,
\end{equation}
where the superscript $(<{r+2})$ denotes the sum of all monomial terms of total weighted degree strictly less than ${r+2}$.

Write $f=\sum_{\nu\geq1}f^{(\nu)}$ and $g=\sum_{\nu\geq1}g^{(\nu)}$, where $f^{(\nu)}$ and $g^{(\nu)}$ are homogeneous of weighted  degree $\nu$.
Since $F(0)=0$, the Hermitian product
$\langle f(z,w),\bar f(\chi,\tau)\rangle_\ell$ 
contains no purely $(z,w)$ or $(\chi,\tau)$ terms. 
Taking in \eqref{eq:LHSvanish} the terms independent of
$(\chi,\tau)$, we get
\begin{equation}\label{eq:gvanish}
g^{(\nu)}=0,\qquad\nu<{r+2}.
\end{equation}
Fix integers $i,j\ge1$ with $i+j<{r+2}$. 
Consider in \eqref{eq:LHSvanish} the component having  
weighted degree $i$ in $(z,w)$ and   weighted degree $j$ in $(\chi,\tau)$. It follows from \eqref{eq:gvanish} that
\begin{equation}\label{eq:orthogonal-polynomials}
 \left\langle
 f^{(i)}(z,w),
 \bar f^{(j)}(\chi,\tau)
 \right\rangle_\ell
 =0,
 \qquad
 i+j<{r+2}.
\end{equation}
For each $i\ge 1$, write 
$$
f^{(i)}(z,w)=\sum_{|\alpha|+2p=i}
v_{\alpha p}z^\alpha w^p.
$$
From \eqref{eq:orthogonal-polynomials}, we have
$$
\sum_{\substack{|\alpha|+2p=i,\\|\beta|+2q=j}}
\left\langle v_{\alpha p},\overline{v_{\beta q}}\right\rangle_\ell z^\alpha w^p\chi^\beta\tau^q
=0,  
\qquad i+j<{r+2}.
$$
Since the monomials $z^\alpha w^p\chi^\beta\tau^q$ are linearly independent, we obtain
\begin{equation}\label{eq:coefficient-orthogonality}
\left\langle v_{\alpha p},\overline{v_{\beta q}}\right\rangle_\ell=0 
\qquad \text{whenever}\ \ |\alpha|+2p+|\beta|+2q<{r+2}.
\end{equation}
For each $j\ge1$, let $V_j\subset\mathbb C^{N-1}$ denote the complex linear span of all coefficient vectors occurring in $f^{(j)}$. Equation \eqref{eq:coefficient-orthogonality} says that
\begin{equation}\label{eq:Vorthogonal}
V_i\perp_\ell {V_j}
\quad \text{whenever}\ \  i+j<{r+2}.
\end{equation}
Set $W=\sum_{j<(r+2)/2}V_j$. Then $W$ is a totally isotropic subspace of $\mathbb C^{N-1}$ with the Hermitian form  $ \langle \cdot, \bar\cdot\rangle_\ell$ of signature $\ell$. Note that $q:=\dim_{\mathbb C}W\le\ell.$ 

For each integer $s<(r+2)/2$, set $W_s:=\sum_{j\le s}V_j$. Choose a basis $e_1,\ldots,e_q$ of $W$ adapted to this filtration. Thus for each $\alpha\in\{1,\cdots,q\}$, there exists an integer $\mu_\alpha<(r+2)/2$, such that
\begin{equation}\label{eq:adapted}
\mu_\alpha=\min\{s:e_\alpha\in W_s\},\qquad W_s=\Span\{e_\alpha:\mu_\alpha\leq s\}.
\end{equation}
If $W=0$, all sums below are empty, $E=\mathbb C^{N-1}$, and $\Lambda_t=I$. Otherwise, apply Lemma \ref{witt} and write $\mathbb C^{N-1}=W\oplus W'\oplus E$, with dual isotropic basis $e'_1,\ldots,e'_q$. Decompose
\begin{equation}\label{eq:fdecompose}
f=\sum_{\alpha=1}^q x_\alpha e_\alpha+\sum_{\alpha=1}^q y_\alpha e'_\alpha+h,
\qquad h\in E,  
\end{equation}
where $x_\alpha, y_\alpha$ are scalar valued holomorphic functions while $h$ is an $E$-valued holomorphic mapping, with
\begin{equation}\label{eq:yal}
x_\alpha=\langle f,\overline{e'_\alpha}\rangle_\ell,\qquad y_\alpha=\langle f,\overline{e_\alpha}\rangle_\ell, 
\qquad \alpha=1,\ldots,q. 
\end{equation}
 
We now estimate the orders of the three parts in \eqref{eq:fdecompose}. If $j<\mu_\alpha$, then $V_j\subset W_{\mu_\alpha-1}$ and the adapted dual basis gives $\langle V_j,\overline{e'_\alpha}\rangle_\ell=0$. Hence $\ord_{\mathrm{wt}} x_\alpha\geq\mu_\alpha$.
Since $V_j\subset W$ for $j<(r+2)/2$, the $E$-component of
$f^{(j)}$ vanishes for all such $j$. %we see that $f-\sum_{0<j< (r+2)/2}f^{(j)}=h \  \hbox{mod} \ W\oplus W'$ by  (\ref{eq:fdecompose}). Differentiating on both sides and then taking $(z,w)=0$, we obtain
In view of \eqref{eq:fdecompose}, it follows that the $E$-component satisfies $\ord_{\mathrm{wt}}h\geq\lceil(r+2)/2\rceil$, where $\lceil x \rceil $ denotes the smallest integer greater than or equal to $x$. 
Moreover, $e_\alpha\in W_{\mu_\alpha}$ and \eqref{eq:Vorthogonal} implies $V_j\perp_\ell W_{\mu_\alpha}$ when $j+\mu_\alpha<r+2$. Thus $\ord_{\mathrm{wt}}y_\alpha\geq r+2-\mu_\alpha$. Altogether, we have proved
\begin{equation}\label{ord}
\ord_{\mathrm{wt}}x_\alpha\geq\mu_\alpha,\qquad \ord_{\mathrm{wt}}y_\alpha\geq r+2-\mu_\alpha,\qquad \ord_{\mathrm{wt}}h\geq\lceil \frac{r+2}{2}\rceil.
\end{equation}

The point of \eqref{ord} is that the two isotropic directions occur at complementary weights. They can therefore be rescaled by reciprocal powers without changing the target Levi form. 
For $t>0$, define a hyperbolic non-isotropic scaling transformation $\Lambda_t$, adapted to this  Witt decomposition, of $\mathbb C^{N-1}$ by
\begin{equation}\label{eq:Ut}
\Lambda_te_\alpha
=t^{\frac{r+2}{2}-\mu_\alpha}e_\alpha,
\qquad \Lambda_t e'_\alpha
=t^{-(\frac{r+2}{2}-\mu_\alpha)}e'_\alpha,
\qquad\Lambda_t|_E=I.
\end{equation}
Let $\delta_t(z,w)=(tz,t^2w)$ and set
\begin{equation}\label{eq:Ft}
f_t(z,w):=t^{-{\frac{r+2}{2}}}\Lambda_tf(tz,t^2w),
\qquad
g_t(z,w):=t^{-({r+2})}g(tz,t^2w).
\end{equation}

We claim that $F_t=(f_t,g_t)$ converges locally uniformly as $t\to0^+$. Indeed, from \eqref{eq:fdecompose} and \eqref{eq:Ut}, we have
$$
f_t=\sum_{\alpha=1}^q t^{-\mu_\alpha}x_\alpha(\delta_t)e_\alpha
+\sum_{\alpha=1}^q t^{-({r+2}-\mu_\alpha)}
y_\alpha(\delta_t)e'_\alpha
+t^{-{\frac{r+2}{2}}}h(\delta_t).
$$
By \eqref{ord}, $$t^{-\mu_\alpha}x_\alpha(\delta_t)
\to x_\alpha^{(\mu_\alpha)},\qquad 
t^{-({r+2}-\mu_\alpha)}y_\alpha(\delta_t)\to y_\alpha^{({r+2}-\mu_\alpha)}$$
locally uniformly. Moreover,
$$
t^{-{\frac{r+2}{2}}}h(\delta_t)\to
\begin{cases}
 h^{({\frac{r+2}{2}})},& {\frac{r+2}{2}}\in\mathbb N,\\ 
\ \  0,& {\frac{r+2}{2}}\notin\mathbb N.
\end{cases}
$$
On the other hand, \eqref{eq:gvanish} gives
$ 
\ord_{\mathrm{wt}}g\ge {r+2},
$ 
and hence
$$
t^{-({r+2})}g(\delta_t)
\to
g^{({r+2})}
$$
locally uniformly. Therefore
$F_t=(f_t,g_t)$ converges locally uniformly to a holomorphic polynomial map $F_0=(f_0,g_0)$, where
\begin{equation*}\label{eq:F0}
\begin{aligned}
f_0&=\sum_{\alpha=1}^q x_\alpha^{(\mu_\alpha)}e_\alpha+\sum_{\alpha=1}^q y_\alpha^{(r+2-\mu_\alpha)}e'_\alpha+\begin{cases}h^{((r+2)/2)},& \frac{r+2}{2}\in\mathbb N,
\\0,&\frac{r+2}{2}\notin\mathbb N,\end{cases}\\
g_0&=g^{(r+2)}.
\end{aligned}
\end{equation*}

We next compute $\rho_{{\mathbb H}_\ell^N}\circ F_t$.   
Since $\Lambda_t\in \Lambda(\ell,N-1)$ preserves the target Hermitian form, \eqref{eq:Ft} gives the exact identity
\begin{equation}\label{eq:scaling}
\begin{split}
\rho_{{\mathbb H}_\ell^N}\circ F_t
&=\operatorname{Im}\bigl(t^{-(r+2)}g\circ\delta_t\bigr)-\Bigl|t^{-\frac{r+2}{2}}\Lambda _t(f\circ\delta_t)\Bigr|^{2}_{\ell}  \\
&=t^{-(r+2)}\Bigl(\operatorname{Im}(g\circ\delta_t)-|f\circ\delta_t|^2_\ell\Bigr)
=t^{-(r+2)}\rho_{{\mathbb H}_\ell^N}(F\circ\delta_t).
\end{split}
\end{equation}
By \eqref{eq:factor}, we have
\begin{equation}\label{eq:limit-factor}
\rho_{{\mathbb H}_\ell^N}\circ F_t
=t^{-({r+2})}a(\delta_t)\rho_{M_\ell}(\delta_t) =\left(t^{-r}a(\delta_t)\right)
\left(t^{-2}\rho_{M_\ell}(\delta_t)\right).
\end{equation}
Since 
$t^{-r}a(\delta_t)\to a^{(r)}$
and $t^{-2}\rho_{M_\ell}(\delta_t)
\to\rho_{{\mathbb H}_\ell^n}$ as $t\to0^+$, where $\rho_{{\mathbb H}_\ell^n}(Z,W) = {\rm{Im}}\ w-\langle z,\bar z\rangle_\ell$.
Passing to the limit in \eqref{eq:limit-factor} gives
\begin{equation}\label{eq:keylimit}
\rho_{{\mathbb H}_\ell^N}(F_0)=a^{(r)}\rho_{{\mathbb H}_\ell^n}.
\end{equation}
In particular,
$F_0({\mathbb H}_\ell^n)\subset {\mathbb H}_\ell^N$.  On the other hand,  since $g_0=g^{(r+2)}$ and $r+2\geq 3$, no term of $g_0$ has weighted degree two. Since $w$ has weighted degree two, we have $(g_0)_w(0)=0$.
We now apply the arbitrary-codimension rigidity theorem of Baouendi--Huang \cite[Theorem~1.6(ii)]{BH05} for mappings between hyperquadrics of the same signature to $F_0$. It follows that $\rho_{{\mathbb H}_\ell^N}\circ F_0\equiv0$.
Equation \eqref{eq:keylimit} then gives $a^{(r)}\rho_{{\mathbb H}_\ell^n}\equiv0$.
Since $\rho_{{\mathbb H}_\ell^n}$ is not identically zero, $a^{(r)}\equiv0$, a contradiction. Hence $a\equiv0$, and the theorem is proved when $M_\ell$ is real analytic.

Now suppose that $M_\ell$ is only smooth. If $F$ is not totally degenerate as a germ at $p$, then $\rho_H\circ F$ is a nonzero real-analytic function germ. Its zero set $X=\{\rho_H\circ F=0\}$ is a real-analytic variety with $\dim_\mathbb R X_p\leq2n-1$. Since $M_\ell\subset X$, we also have $\dim_\mathbb R X_p\geq2n-1$. Thus $\dim_\mathbb R X_p=\dim_\mathbb R M_\ell=2n-1$. By Malgrange's theorem \cite[Proposition 3.11]{Ma67}, the germ of $M_\ell$ at $p$ is real analytic. The result just proved applies and completes the proof.
%Suppose now that ${M_\ell}\subset\mathbb C^n$ is only $C^\infty$, Levi-nondegenerate of signature $\ell$, and that $  F:(\mathbb C^n,p)\to \mathbb C^N $  is holomorphic near $p$ with $  F({M_\ell})\subset {\mathbb H}_\ell^N. $  Assuming that $F$ is not totally degenerate near $p$, it follows that the real-analytic function $  {\rho_{{\mathbb H}_\ell^N}\circ F}\not\equiv0  $,  and its zero set $X:\  =\{\rho_{{\mathbb H}_\ell^N}\circ F=0\}$ is a real-analytic variety satisfying $  \dim_{\mathbb R}X_p\le 2n-1. $ On the other hand, since $ {M_\ell}\subset X$,   this inclusion forces   $$ \dim_{\mathbb R}X_p\ge \dim_{\mathbb R}M_\ell  =  2n-1. $$   Consequently, $$ \dim_{\mathbb R}X_p=2n-1 = \dim_{\mathbb R}{M_\ell}. $$ By Malgrange's theorem in \cite{Mal} on smooth submanifolds contained in real-analytic varieties of the same dimension, it follows that the germ of ${M_\ell}$ at $p$ is real-analytic. Therefore the real-analytic result proved previously applies to ${M_\ell}$ at $p$ and completes the proof. 
\end{proof}  
 
We conclude the paper with  a remark    concerning the proof of Theorem \ref{main}.

% \begin{remark}
 %    Although the bound \eqref{eq:dimW} is not used for the later rescaling argument itself, 
%it reflects the intrinsic geometry of the indefinite Levi form and emphasizes that the isotropic part is controlled by the signature rather than by the codimension.
 %\end{remark}

\begin{remark}
The hyperquadric structure of the target is essential for the preceding rescaling argument. Indeed, for a general real-analytic Levi-nondegenerate target written in Chern-Moser normal coordinates as
$$
\rho_2(Z,W)
=\operatorname{Im} W-|Z|_\ell^2
+\Phi(Z,\overline Z,\operatorname{Re}W),
\qquad \Phi=O_{\mathrm{wt}}(4),
$$
the hyperbolic non-isotropic dilation $\Lambda_t$ may magnify certain terms in $\Phi$.  The same conclusion nevertheless holds when $\Phi(Z,\overline Z,\operatorname{Re}W)
=\varphi\bigl(|Z|_\ell^2,\operatorname{Re}W\bigr)$, where $\varphi(s,u)$ is real-analytic near the origin and vanishes to order at least two.
In fact, the preceding argument gives
$\operatorname{Ord}_{\mathrm{wt}}g\ge r+2$ and
$\operatorname{Ord}_{\mathrm{wt}}|f|_\ell^2\ge r+2$.
Since $\varphi(s,u)=O((|s|+|u|)^2)$, it follows that
$\operatorname{Ord}_{\mathrm{wt}}\varphi\bigl(|f|_\ell^2,\operatorname{Re}g\bigr)\ge 2(r+2)$, and hence this term vanishes in the rescaling limit. Thus the limiting identity is again \eqref{eq:keylimit}, and the preceding proof applies.
%Thus an analogue of the exact scaling identity \eqref{eq:scaling} need not hold.  On the other hand, transversality at $0$ still holds when 
%$\Phi(Z,\overline{Z},\operatorname{Re}W)=\varphi\bigl(|Z|_\ell^2,\operatorname{Re} W\bigr)$.
Thus, the $U(N-1,\ell)$-symmetry of the target is crucial for establishing transversality.
\end{remark}

\vspace{0.4cm}

\fontsize{11}{11}\selectfont
\noindent X. Huang. \href{mailto:huangx@math.rutgers.edu}
{\nolinkurl{huangx@math.rutgers.edu}}

\vspace{0.1 cm}

\noindent Department of Mathematics, Rutgers University, New Brunswick, NJ 08903, USA.

\vspace{0.4cm}

\noindent Y. Zhang. \href{zhan1313@pfw.edu}
{\nolinkurl{zhan1313@pfw.edu}}

\vspace{0.1 cm}

\noindent Department of Mathematical Sciences, Purdue University Fort Wayne, Fort Wayne, IN 46805-1499, USA. 

\vspace{0.4cm}

\noindent W. Zhu. \href{wxzhu@xmu.edu.cn; weixia.zhu@univie.ac.at}
{\nolinkurl{wxzhu@xmu.edu.cn};\hspace{0.5em}\nolinkurl{weixia.zhu@univie.ac.at}}
\vspace{0.1cm}

\noindent  
School of Mathematical Sciences, Xiamen University, Xiamen 361005, China;
\vspace{0.1cm}

\noindent
Faculty of Mathematics, University of Vienna, Oskar-Morgenstern-Platz 1, 1090 Vienna, Austria.

\vspace{0.2 cm}
\end{document}